\newtheorem{theorem}{Theorem}[section]
\newtheorem{lemma}[theorem]{Lemma}
\newtheorem{prop}[theorem]{Proposition}
\newtheorem{cor}[theorem]{Corollary}
\theoremstyle{definition}
\newtheorem{definition}[theorem]{Definition}
\newtheorem{remark}[theorem]{Remark}
\newtheorem{ex}[theorem]{Example}
\def\<{\langle}
\def\>{\rangle}
\def\a{\alpha}
\def\b{\beta}
\def\d{\delta}
\def\k{{\bf k}}
\def\N{{\mathbb N}}
\def\R{{\mathbb R}}
\def\Z{{\mathbb Z}}
\def\Q{{\mathbb Q}}
\def\J{{\mathcal J}}
\def\O{{\mathcal O}}
\def\1{\mathbf 1}
\def\N{{\mathbb N}}
\begin{document}

\title{On topological properties of the formal power series substitution group}

\author{I.~Babenko, S.~Bogatyi}

\thanks{Partially supported by the grants NSh 1562.2008.1 и RFSF 06-01-00764.}

\address{Universit\'e Montpellier II CNRS UMR 5149,
Institut de Math\'ematiques et de Mod\'e\-lisation de Montpellier,
Place Eug\`ene Bataillon, B\^at. 9, CC051,
34095 Montpellier  CEDEX 5, France and
Department of Mathematics and Mechanics, Moscow State University (Lomonosov),
119992 Moscow, Russia}
\email{babenko@math.univ-montp2.fr; bogatyi@inbox.ru}

\maketitle

\begin{abstract}
Certain topological properties of the group $\J(\k)$ of all formal one-variable power series
with coefficients in a topological unitary ring $\k$ are considered.
We show, in particular, that in the case when $\k=\Q$ the group $\J(\Q)$
has no continuous bijections into a locally compact group.
In the case when $\k=\Z$ supplied with discrete topology, in spite of the fact
that the group $\J(\Z)$ has continuous bijections into compact groups, it cannot be
embedded into a locally compact group.
In the final part of the paper the compression property for topological groups is considered.
We establish the compressibility of $\J(\Z)$.
\end{abstract}

\maketitle

Let ${\bf k}$ be a commutative ring with identity element.
Further on we always suppose  ${\bf k}$ be a topological ring,
if a topology is not indicated explicitly we suppose it to be discrete.
Consider the set $\J({\bf k})$ of all formal power series in the variable $x$
with coefficients in ${\bf k}$ having the form :
$$
f(x) = x + \alpha_1x^2 + \alpha_2x^3 + ... =x(1 + \alpha_1x + \alpha_2x^2 + ...),
\hskip7pt \alpha_n \in {\bf k} .
\eqno(1)
$$

This set becomes a group under the operation of the substitution of series and then removing the parentheses
and collecting similar terms (cf.~\cite{J54}) : $f \circ g = f\bigl(g(x)\bigr)$.
The general algebraic properties of the group $\J({\bf k})$ were studied in \cite{J54}.
This group, especially in the case of a field of finite characteristic, has been of
high interest during the last 10-15 years.
Without going into details we refer the reader to the survey~\cite{C00}, see also
~\cite{BB08} where the case ${\bf k}=\Z$ is considered and where there are references
to some more papers published later.

The group $\J({\bf k})$ is a topological group in a natural way.
Namely the set iso\-morphism $j:{\bf k}^{\aleph_0}\to \J(\k)$, given by (1),
defines the "product topology" on $\J(\k)$. This topology coincides with
the inverse limit topology of groups $\J^m(\k)$ considered further on.

Even if ${\bf k}$ is not \`a priori a topological ring we, as mentioned above,
can supply it with the discrete topology. This defines the 0-dimensional
inverse limit topology on $\J({\bf k})$, (cf. ~\cite{BB08}).
The latter is the strongest topology on  $\J({\bf k})$ of all
natural topologies but the discrete one.

We assume that a topological group $G$ admits a continuous bijection into a topological
group $H$ if there exists a {\it continuous group-monomorphism} $\phi : G \longrightarrow H$.
What is more, if such a monomorphism $\phi : G \longrightarrow H$ is at the same time a homeomorphism onto its
image we say that the topological group  $G$  is embedded into
the topological group $H$.
From this point of view the group $\J(\Z)$,  although being neither compact nor locally compact,
nevertheless admits a continuous bijection into a compact group.

If we supply $\Z$ with the $p$-adic topology and if we embed $\Z$
into the ring $\Z_{(p)}$ of $p$-adic integers for a prime number $p$ we can obtain
the simplest and most natural continuous bijection of $\J(\Z)$ into a compact group.
The compactness of the ring $\Z_{(p)}$ in the $p$-adic topology implies
the compactness of the group $\J(\Z_{(p)})$, moreover $\J(\Z)$ is
an everywhere dense subgroup therein.

It is easy to see that $\J(\Z)$ is a residually finite group.
Moreover it is approximated by finite $p$-groups for any given prime $p$.
Thus one can obtain some other interesting continuous bijections of $\J(\Z)$
into compact groups, for example into the group $\J(\Z_p)$
where $\Z_p = \Z/p\Z$ is a prime field.

It is known that $\J(\Z_p)$ is a universal pro-$p$-group, that is
every countably based pro-$p$-group can be embedded into $\J(\Z_p)$
(cf., for example, \cite{C00}).
The universality of $\J(\Z_p)$ and the approximability of $\J(\Z)$
by finite $p$-groups easily imply that $\J(\Z)$ admits continuous bijections
into $\J(\Z_p)$.
Unlike the bijection into $\J(\Z_{(p)})$ described above
all such bijections are unfortunately implicit.

One of the principal purposes of this paper is to show that $\J(\Z)$
supplied with the natural topology admits no embeddings into a locally
compact group.
\vskip7pt
\noindent
{\bf Theorem 1.}
{\it Let $\k$ be an infinite discrete commutative ring
and let the group $\J(\k)$ be endowed with the inverse limit topology.
Then $\J(\k)$ does not admit any embedding into a locally compact group}.
\vskip7pt
The following result shows that if we pass from $\Z$ to the residual field $\Q$
the situation becomes much more rigid.
\vskip7pt
\noindent
{\bf Theorem 2.}
{\it Let $\k$ be a topological ring containing the field of rational numbers.
Then the group $\J(\k)$ does not admit any continuous bijection into a locally compact group}.
\vskip7pt
Here we don't make any assumptions about the topology of the ring $\k$,
no topological properties of the inclusion $\Q \subset \k$ are
assumed either.

Finally we note that the problem of embedding into locally compact groups for
$\J(\Z)$ (or $\J(\k)$ for rings of general type)
is interesting from both the view point of topology and that of dynamics,
in particular when studying the amenability of such groups.
See \cite{BB09} for the latter question.

\section{Embeddings of the group $\J(\k)$}

Let the base ring $\k$ be supplied with some topology, the case of the discrete one
is of special interest.
The corresponding inverse limit topology on the group $\J(\k)$ is considered.

Subgroups
$$
\J_n(\k) = \{f(x)\in \J(\k)\big{|} \alpha_1 = \alpha_2=...=\alpha_n = 0 \}, \ \
n = 1, 2, ...
$$
are normal and form a base of neighborhoods of the unity element.
It is natural that $\J_0(\k)=\J(\k)$.
The following sequences of nilpotent groups will also be useful
$$
\J^m(\k) = \J(\k) \big{/}\J_m(\k), \ \ m = 1, 2, ... \hskip3pt ; \hskip3pt
\J_n^m(\k) = \J_n(\k) \big{/}\J_m(\k), \ \ m > n \hskip3pt .
\eqno(2)
$$

We divide the proof of the above results into several simpler steps.
The verification of the following statements is not difficult and we shall omit
some part of the proofs.
\begin{prop}
\label{propk}
The mapping $p\colon \J_{n}(\k)\to \k$, $p(f)= \alpha_{n+1}\in \k$ induces
an isomorphism of the topological groups $\hat p\colon \J_{n}^{n+1}(\k)\to \k$, where $\k$
is only considered as an addition group.
\end{prop}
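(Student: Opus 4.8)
The plan is to verify directly that $p$ is a continuous, open, surjective group homomorphism onto $\k$ (viewed as an additive group) with kernel exactly $\J_{n+1}(\k)$, and then to invoke the first isomorphism theorem together with the standard fact that a continuous open bijective homomorphism of topological groups is a topological isomorphism.

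First I would check that $p$ is a homomorphism, which is the only genuinely computational point. Take $f(x)=x+\alpha_{n+1}x^{n+2}+\alpha_{n+2}x^{n+3}+\cdots$ and $g(x)=x+\beta_{n+1}x^{n+2}+\cdots$ in $\J_n(\k)$. Then $f\bigl(g(x)\bigr)=g(x)+\alpha_{n+1}\,g(x)^{n+2}+\cdots$, and since $g(x)\equiv x\pmod{x^{n+2}}$ we get $g(x)^{n+2}\equiv x^{n+2}\pmod{x^{n+3}}$; comparing coefficients of $x^{n+2}$ shows that the $x^{n+2}$-coefficient of $f\circ g$ equals $\alpha_{n+1}+\beta_{n+1}$, i.e. $p(f\circ g)=p(f)+p(g)$. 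Surjectivity is clear, since $x+ax^{n+2}\in\J_n(\k)$ is mapped to $a$ for every $a\in\k$; and $p(f)=0$ holds if and only if $\alpha_{n+1}=0$, that is, if and only if $f\in\J_{n+1}(\k)$, so $\Ker p=\J_{n+1}(\k)$ (in particular this re-proves the normality of $\J_{n+1}(\k)$ in $\J_n(\k)$).

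Second I would deal with the topology. By definition the topology on $\J_n(\k)$ is the subspace topology induced from the product topology on $\k^{\aleph_0}$ through the parametrization $j$; hence $p$ is the restriction to $\J_n(\k)$ of the coordinate projection onto the $\alpha_{n+1}$-factor, which is continuous and open, and it stays open after restriction because the relevant coordinates can be prescribed independently. Writing $\pi\colon\J_n(\k)\to\J_n^{n+1}(\k)$ for the canonical projection, which is a continuous, open and surjective homomorphism, the map $p$ factors as $p=\hat p\circ\pi$ for a well-defined bijective homomorphism $\hat p$. Then $\hat p$ is continuous since $\pi$ is a quotient map, and $\hat p$ is open since $p=\hat p\circ\pi$ is open and $\pi$ is surjective. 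Therefore $\hat p\colon\J_n^{n+1}(\k)\to\k$ is an isomorphism of topological groups.

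The main, and admittedly modest, difficulty is the bookkeeping in the first step: making sure that substituting one series into another changes the leading nontrivial coefficient by mere addition and produces no unexpected contribution at order $x^{n+2}$. The topological part is routine once one recalls that the quotient map of a topological group by a normal subgroup is open.
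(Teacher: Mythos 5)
Your proof is correct. The paper omits the proof of this proposition entirely (the authors explicitly state that the verification of these preliminary statements is not difficult and omit part of the proofs), and your argument --- the leading-coefficient computation showing $p$ is a surjective homomorphism with kernel $\J_{n+1}(\k)$, followed by the routine check that the induced bijection on the quotient is continuous and open --- is exactly the standard verification they had in mind.
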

\begin{prop}
\label{propembed}
If a monomorphism $f\colon G\to H$ of topological groups is a topological embedding,
then for any closed normal subgroup
$H_1\subset H$ the subgroup $G_1=f^{-1}(H_1)$ is a closed normal subgroup in
$G$, and the induced homomorphism of factor groups
$$\hat f\colon G/G_1\to H/H_1$$
is a topological embedding.
\end{prop}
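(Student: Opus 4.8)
The plan is to dispose of the algebraic and continuity assertions, which are routine, and then to concentrate on the embedding property, where all the content lies. For the routine part: since $f$ is continuous and $H_1$ is closed, $G_1=f^{-1}(H_1)$ is closed, and it is normal because the preimage of a normal subgroup under a homomorphism is normal; the induced map $\hat f$ is well defined, is a homomorphism, and is injective, since $f(g)\in H_1$ forces $g\in G_1$. Writing $\pi_G\colon G\to G/G_1$ and $\pi_H\colon H\to H/H_1$ for the canonical projections --- open, continuous, surjective homomorphisms --- one has $\pi_H\circ f=\hat f\circ\pi_G$, and since $\pi_G$ is a quotient map this already gives the continuity of $\hat f$.

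It remains to see that the continuous bijection $\hat f\colon G/G_1\to Q$ is open, where $Q:=\hat f(G/G_1)\subset H/H_1$ carries the subspace topology. I would reduce this to the statement that $\pi_H\circ f\colon G\to Q$ is a quotient map: since $\pi_G$ is a surjective quotient map and $\pi_H\circ f=\hat f\circ\pi_G$, it would then follow that $\hat f\colon G/G_1\to Q$ is a quotient map, hence a homeomorphism, i.e. an embedding into $H/H_1$. Because $f$ is an \emph{embedding} (not merely a monomorphism), $f\colon G\to f(G)$ is a homeomorphism, so the point reduces to showing that the restriction $\pi_H|_{f(G)}\colon f(G)\to Q$ is a quotient map, equivalently, that it is relatively open. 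Now $\pi_H$ itself is an open map, and the restriction of an open map to a $\pi_H$-saturated subset (one equal to the full $\pi_H$-preimage of its image) is an open quotient map onto its image; so if $f(G)$ were $\pi_H$-saturated --- i.e. if $H_1\subset f(G)$ --- we would be finished at once.

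In general $f(G)$ need not be $\pi_H$-saturated, and this is the step where I expect the real difficulty. By homogeneity it suffices to check openness at the identity: given an open neighbourhood $U$ of the identity in $G/G_1$, one must show that $\hat f(U)$ is a neighbourhood of the identity in $Q$. Put $V=\pi_G^{-1}(U)$, an open neighbourhood of the identity of $G$ that is $G_1$-invariant, $VG_1=V$; since $f$ is an embedding, $f(V)=f(G)\cap W$ for some open $W\subset H$ containing the identity. Then $\hat f(U)=\pi_H(f(V))$, while $Q\cap\pi_H(W)$ is an open neighbourhood of the identity in $Q$ (recall $\pi_H$ is open), so everything comes down to the single equality
\[
\pi_H\bigl(f(G)\cap W\bigr)=\pi_H\bigl(f(G)\bigr)\cap\pi_H(W).
\]
The inclusion $\subset$ is trivial; for the reverse one is handed a relation $f(g)H_1=wH_1$ with $g\in G$, $w\in W$, and must exhibit a point of $f(V)$ lying in the coset $f(g)H_1$. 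Since $f(g)H_1\cap f(V)=f(gG_1)\cap W$, this is a statement about how the coset $f(gG_1)$, sitting inside the possibly non-$\pi_H$-saturated subgroup $f(G)$, meets the $H_1$-cosets that hit $W$; it is exactly here that $VG_1=V$ and the fact that $f$ is a homeomorphism onto $f(G)$ --- perhaps after a more careful choice of $W$ --- have to be used. The remaining manipulations with quotient maps and open neighbourhoods are purely formal.
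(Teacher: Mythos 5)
Your routine steps (closedness and normality of $G_1$, injectivity and continuity of $\hat f$) are fine, and your reduction of the embedding property to the single equality $\pi_H\bigl(f(G)\cap W\bigr)=\pi_H\bigl(f(G)\bigr)\cap\pi_H(W)$ correctly locates where all the content lies. But that is precisely the step you do not prove: you end by saying that $VG_1=V$ and a ``more careful choice of $W$'' \emph{have to be used} there, without exhibiting any argument. As it stands the proof is incomplete at its only non-trivial point.

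Moreover, the gap cannot be closed, because the proposition as literally stated is false. Take $H=\mathbb{R}^2$, let $G=\{(t,\sqrt2\,t):t\in\mathbb{R}\}$ with the inclusion $f$ (a closed topological embedding, $G\cong\mathbb{R}$), and let $H_1=\mathbb{Z}^2$, a closed normal subgroup. Then $G_1=f^{-1}(\mathbb{Z}^2)=\{0\}$, so $\hat f\colon\mathbb{R}\to\mathbb{T}^2$ is the irrational winding: a continuous injective homomorphism with dense image, not a topological embedding. In your notation, the coset $f(g)H_1$ meets $f(G)$ only in the single point $f(g)$, which has no reason to lie in $W$ even when the coset meets $W$; this is exactly where your reverse inclusion breaks, for every choice of $W$. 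Some additional hypothesis is indispensable --- for instance $H_1\subset f(G)$, or that $G_1$ be open in $G$ and $H_1$ open in $H$, so that both quotients are discrete. The paper omits the proof, and in its only application ($G=\J_m(\k)$ with $G_1=\J_{m+1}(\k)$ open in $G$, $H=\overline{f(G)}$, $H_1=\overline{f(G_1)}$, which is then open in $H$ by density of $f(G)$) it is the discreteness of the two quotient groups that makes the conclusion true; the general statement you were asked to prove does not hold, so the honest conclusion of your analysis should have been that the proposition needs a stronger hypothesis, not that the remaining manipulations are ``purely formal.''
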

\begin{prop}
\label{propisolatpoint}
If $X\subset Y$ is an everywhere dense subset in a space $Y$ having no
isolated points then $X$ has no isolated points either.
\end{prop}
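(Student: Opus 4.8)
The plan is to argue by contradiction: I would suppose that the everywhere dense subspace $X$ has an isolated point while the ambient space $Y$ has none, and then manufacture an isolated point of $Y$, contradicting the hypothesis.

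First I would unwind the definition of the induced topology. If $x\in X$ is isolated in $X$, then $\{x\}$ is open in $X$, so there is an open set $U\subset Y$ with $U\cap X=\{x\}$; note that $U\neq\varnothing$ since $x\in U$. If it happens that $U=\{x\}$, then $\{x\}$ is open in $Y$, i.e.\ $x$ is an isolated point of $Y$ — contradiction — so I may assume $U$ contains a point $y$ with $y\neq x$. The decisive step is to delete $x$: the set $V=U\setminus\{x\}$ is open in $Y$ (here one uses that one-point subsets are closed, which is automatic for the Hausdorff topological groups occurring in the applications; in general it is enough to assume $Y$ is $T_1$) and $V$ is nonempty because $y\in V$. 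Since $X$ is everywhere dense in $Y$, the nonempty open set $V$ must meet $X$; but $V\cap X=(U\cap X)\setminus\{x\}=\{x\}\setminus\{x\}=\varnothing$, which is the contradiction sought. Hence no point of $X$ is isolated.

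The one place I would be careful about is the use of ``$\{x\}$ is closed in $Y$''. Without a separation axiom the statement is in fact false — a one-point subset of a two-point indiscrete space is dense and has an isolated point, while the whole space has none — so a careful write-up should either restrict to $T_1$ (equivalently, for groups, Hausdorff) spaces, which is harmless in the present context, or note that $X$ is infinite in the situations where the proposition gets applied. Apart from this subtlety the argument only uses the description of the subspace topology and the defining property of a dense set (every nonempty open subset of $Y$ meets $X$), both entirely routine, which is presumably why the authors propose to omit it.
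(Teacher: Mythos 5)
Your argument is essentially identical to the paper's: choose an open $V_x\subset Y$ with $V_x\cap X=\{x\}$, use the absence of isolated points in $Y$ to find $y\in V_x\setminus\{x\}$, and observe that $(Y\setminus\{x\})\cap V_x$ is a nonempty open set missing $X$, contradicting density. Your side remark about needing $\{x\}$ closed (a $T_1$-type hypothesis) is a fair point --- the paper's proof silently uses the same fact when it calls $(Y\setminus x)\cap V_x$ a neighborhood of $y$ --- and it is indeed harmless in the Hausdorff group setting where the proposition is applied.
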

\begin{proof}
Let $x\in X$ be an isolated point in $X$, that is, $x$ is an open subset of $X$.
Choose such a neighborhood $V_x$ of $x$ in $Y$ that $V_x\cap X=x$.
Since $x$ is not an isolated point in $Y$, there exists a point
$y\in V_x\setminus x\subset Y\setminus X$.
The set $(Y\setminus x)\cap V_x$ is a neighborhood of $y$ and it does not contain any points
in $X$. This contradicts the density of $X$ in $Y$.
\end{proof}
\begin{prop}
\label{propgroupisolatpoint}
If a compact group $G$ has an isolated point, then
it is finite.
\end{prop}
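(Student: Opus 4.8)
The plan is to reduce everything to the single point being open, then use translations and compactness.

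\medskip

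\textbf{Proof proposal.} First I would observe that in a topological group all points look alike: if $g\in G$ is an isolated point, then for any $h\in G$ the left translation $x\mapsto hg^{-1}x$ is a homeomorphism of $G$ onto itself sending $g$ to $h$, so $h$ is isolated as well. Hence every singleton $\{h\}$ is open in $G$, which is precisely the statement that $G$ is discrete. Next I would invoke compactness: the cover of $G$ by the open sets $\{h\}$, $h\in G$, admits a finite subcover, and since the only subcover of this cover is the whole family, $G$ must be finite.

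\medskip

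I expect no serious obstacle here; the only point deserving care is the very first step, namely that the existence of one isolated point forces discreteness. This is exactly where the group structure (homogeneity under translations) is used, and it is worth stating explicitly, since for a general topological space having one isolated point says nothing about the others. Everything else — passing to a finite subcover — is immediate from compactness. In short, the proof is: homogeneity $\Rightarrow$ discrete; discrete $+$ compact $\Rightarrow$ finite.
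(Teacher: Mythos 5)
Your proof is correct and is exactly the standard argument the authors had in mind (the paper omits the proof of this proposition as routine): homogeneity under translation makes every point isolated, hence $G$ is discrete, and a discrete compact space is finite by extracting a finite subcover from the cover by singletons. No gaps.
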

\begin{cor}
\label{corcompactgroup}
A topological group containing an infinite discrete subgroup
 cannot be imbedded into a compact group.
\end{cor}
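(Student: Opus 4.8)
The plan is a short argument by contradiction, reducing everything to the two propositions just proved. Suppose the topological group $G$ contains an infinite discrete subgroup $D$ and at the same time admits an embedding $\phi\colon G\to K$ into a compact group $K$; recall that this means $\phi$ is a continuous group monomorphism which is a homeomorphism onto its image. I aim to exhibit inside $K$ a compact group that is simultaneously forced to be infinite and finite.

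First I would transport $D$ into $K$. Since $\phi$ is a homomorphism and is injective, $\phi(D)$ is an infinite subgroup of $K$; since $\phi$ is a homeomorphism onto its image, $\phi$ carries $D$ homeomorphically onto $\phi(D)$ with the topology induced from $K$, so $\phi(D)$ is discrete. Put $H=\overline{\phi(D)}$, the closure of $\phi(D)$ in $K$. The closure of a subgroup of a topological group is a subgroup, and a closed subset of a compact space is compact, so $H$ is a compact topological group and $\phi(D)$ is an everywhere dense subgroup of $H$.

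Next I would apply Propositions \ref{propisolatpoint} and \ref{propgroupisolatpoint}. Being discrete and nonempty, $\phi(D)$ has an isolated point; hence, by the contrapositive of Proposition \ref{propisolatpoint}, the space $H$ has an isolated point. Then Proposition \ref{propgroupisolatpoint}, applied to the compact group $H$, forces $H$ to be finite --- contradicting the fact that $\phi(D)\subset H$ is infinite. This contradiction shows that no such embedding $\phi$ exists.

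I do not expect a genuine obstacle here: the proof is essentially a two-step assembly of the preceding propositions. The only points worth a line of justification are the two standard facts invoked in the transport step --- that a topological embedding sends a discrete subgroup to a discrete subgroup, and that the closure of a subgroup in a topological group is again a subgroup. It is also worth noting that the hypothesis of an \emph{embedding}, rather than merely a continuous bijection, is what guarantees that $\phi(D)$ remains discrete in $K$; without it the corollary would fail, as the introductory remark on continuous bijections of $\J(\Z)$ into compact groups illustrates.
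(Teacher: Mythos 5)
Your argument is correct and is exactly the assembly the paper intends (it leaves the corollary unproved as an immediate consequence of Propositions \ref{propisolatpoint} and \ref{propgroupisolatpoint}): transport the infinite discrete subgroup, take its closure in the compact group, deduce an isolated point by density, and conclude finiteness for a contradiction. Nothing is missing.
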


Theorem 1 immediately follows from the following statement.
\begin{theorem}
\label{teocompactsubstit}
If the additive group of a topological ring $\k$ contains an infinite discrete subgroup,
then the group $\J(\k)$  cannot be embedded into a locally compact group.
\end{theorem}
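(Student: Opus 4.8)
The plan is to argue by contradiction, using the structure theory of locally compact groups together with the near-finiteness of the upper central quotients of $\J(\k)$. Suppose $\phi\colon \J(\k)\to H$ is a topological embedding into a locally compact group $H$; replacing $H$ by the closure of $\phi(\J(\k))$, I may assume $\phi$ has dense image, so $H$ is a locally compact group with a dense countable-based subgroup (the image of $\J(\k)$, which is metrizable). First I would localize: pick a compact neighborhood $W$ of the identity in $H$ and set $U=\phi^{-1}(W)$, an open neighborhood of $1$ in $\J(\k)$. Since the subgroups $\J_n(\k)$ form a base at the identity, there is an $n$ with $\J_n(\k)\subset U$, hence $\phi(\J_n(\k))\subset W$, so the closure $\ov{\phi(\J_n(\k))}$ is compact. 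Thus $\J_n(\k)$ itself embeds (via $\phi$) as a subgroup of a compact group $K=\ov{\phi(\J_n(\k))}$.

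Now I would exploit the tower (2). Consider the chain of closed normal subgroups $\J_n(\k)\supset \J_{n+1}(\k)\supset \J_{n+2}(\k)\supset\cdots$ inside $\J_n(\k)$. The key point is Proposition~\ref{propk}: each quotient $\J_m^{m+1}(\k)\cong \k$ as a topological group (additively), and by hypothesis $\k$ contains an infinite discrete subgroup, say $D$. Pull $D$ back: there is a subgroup $A_m\subset \J_m(\k)$ containing $\J_{m+1}(\k)$ with $A_m/\J_{m+1}(\k)\cong D$ discrete and infinite. Applying the embedding $\phi$ restricted to $\J_n(\k)$ and invoking Proposition~\ref{propembed} with the closed normal subgroup $\ov{\phi(\J_{m+1}(\k))}\subset K$, I get an induced topological embedding of $\J_m(\k)/\J_{m+1}(\k)\cong\k$ into the quotient $K/\ov{\phi(\J_{m+1}(\k))}$. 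A Hausdorff quotient of a compact group is compact, so $\k$ (additively) embeds topologically into a compact group. But then its infinite discrete subgroup $D$ embeds into a compact group, contradicting Corollary~\ref{corcompactgroup}.

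Let me make the reduction to compactness clean, since $H$ is only assumed \emph{locally} compact: a closed subgroup of a locally compact group is locally compact, and $\ov{\phi(\J_n(\k))}$ is both closed in $H$ and contained in the compact set $W$, hence genuinely compact — that is the one place where local compactness (plus closedness of the image) is actually used, and it is the crux of passing from the "local" hypothesis to a "global" compact group. One technical wrinkle: to apply Proposition~\ref{propembed} I need $\phi|_{\J_n(\k)}$ to be a topological embedding of $\J_n(\k)$ into the compact group $K$; this is immediate because $\phi$ is a topological embedding of $\J(\k)$ and $\J_n(\k)$ carries the subspace topology, while $K$ carries the subspace topology from $H$, so $\phi|_{\J_n(\k)}\colon\J_n(\k)\to K$ is again a topological embedding onto a dense subgroup of $K$.

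I expect the main obstacle to be purely a matter of bookkeeping rather than a deep difficulty: one must be careful that the quotient $K/\ov{\phi(\J_{m+1}(\k))}$ really receives a \emph{topological} embedding of $\k$ and not merely a continuous injection. Proposition~\ref{propembed} is stated precisely to handle this, but it requires that $\ov{\phi(\J_{m+1}(\k))}$ be normal in $K$; this follows since $\J_{m+1}(\k)$ is normal in $\J_n(\k)$, hence $\phi(\J_{m+1}(\k))$ is normal in $\phi(\J_n(\k))$, and the closure of a normal subgroup in a topological group is normal. With that observed, the argument closes: the existence of an infinite discrete subgroup in the additive group of $\k$ is inherited by a compact group, which is absurd by Corollary~\ref{corcompactgroup} (indeed already by Propositions~\ref{propisolatpoint} and~\ref{propgroupisolatpoint}, since an infinite discrete subgroup would be dense-in-itself-free inside a compact group with an isolated point, forcing finiteness). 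Hence no such $\phi$ exists.
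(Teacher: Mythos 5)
Your proposal is correct and follows essentially the same route as the paper: localize to a compact closure $K=\ov{\phi(\J_n(\k))}$ via a compact neighborhood of the identity, apply Proposition~\ref{propembed} to the closed normal subgroup $\ov{\phi(\J_{m+1}(\k))}$ to obtain a topological embedding of $\J_m(\k)/\J_{m+1}(\k)\cong\k$ into a compact quotient, and contradict Corollary~\ref{corcompactgroup}. The extra remarks (dense image, pulling back the discrete subgroup $D$, normality of the closure) are details the paper also handles, just more tersely, with the paper simply taking $m=n$.
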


\begin{proof}
Let $f\colon \J(\k)\to \tilde H$ be a topological embedding and let
$\tilde H$ be a locally compact group.
Consider a compact neighborhood $V\subset \tilde H$ of the identity.
Then $U = f^{-1}(V)\subset \J(\k)$ is a neighborhood of the identity in $\J(\k)$.
So there exists such an index $m$ that $\J_m(\k)\subset U$.
The subset $f(\J_m(\k))\subset V$ is a subgroup, so its closure $H = \overline{f(\J_m(\k))}$
is a compact group.
It is easy to see that the restriction mapping (which we denote by the same symbol)
$f\colon \J_m(\k)\to H$ is a topological embedding.
Since the subgroup $G_1 = \J_{m+1}(\k)\subset \J_m(\k)$
is closed and normal in $\J_m(\k)$, then
the closure of its image $H_1 = \overline{f(G_1)}$ is
a closed and normal subgroup of the compact group $H$.
Since the subgroup $f(\J_m(\k))$
is dense in $H$ we have $f(G_1)=H_1\cap f(\J_m(\k))$.

According to Proposition~\ref{propembed} the induced mapping
of factor groups
$$\hat f\colon \J_m(\k)/\J_{m+1}(\k)\to H /H_1$$
is a topological embedding.
The isomorphism $\J_m(\k)/\J_{m+1}(\k)\simeq \k$ of Proposition~\ref{propk}
together with Corollary~\ref{corcompactgroup} lead to a contradiction.
This completes the proof.
\end{proof}

\section{Homomorphisms of $\J(\Q)$ into compact and locally compact groups}

As we saw in the introduction
there are a lot of different continuous bijections of $\J(\Z)$
not only into locally compact groups but into compact groups as well.
The situation radically changes if we pass from integer coefficients to rational ones.

Now we shall study the structure of (continuous) homomorphisms from $\J(\Q)$
into locally compact groups.
Theorem 2 stated in the introduction immediately follows from the following
stronger result, which we shall prove in this section.
\begin{theorem}
\label{teoQ}
Let $G$ be a locally compact group and
$\phi : \J(\Q) \longrightarrow G$ be a (continuous) homomorphism.
Then there exists a natural number $m$ such that
$\J_m(\Q) \subset \ker \phi$.
If furthermore $G$ is compact then $\J_2(\Q) \subset \ker \phi$.
\end{theorem}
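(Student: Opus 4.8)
The plan is to reduce both assertions to the case of a homomorphism into a \emph{compact Lie} group, where the divisibility of $\J(\Q)$ over $\Q$ forces an entire tail $\J_m(\Q)$ into the kernel.

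\emph{Step 1 (reduction to a compact, then a compact Lie, target).} Let $\phi\colon\J(\Q)\to G$ be continuous with $G$ locally compact, and choose a compact neighbourhood $V$ of the identity. Then $\phi^{-1}(V)$ is a neighbourhood of the identity in $\J(\Q)$, hence contains some $\J_{m_0}(\Q)$; so $\phi(\J_{m_0}(\Q))$ is a subgroup lying in $V$ and its closure $H=\overline{\phi(\J_{m_0}(\Q))}$ is compact. Therefore it suffices to prove the following claim: \emph{for every $a\ge 0$, every continuous homomorphism of $\J_a(\Q)$ into a compact group annihilates $\J_{2a+2}(\Q)$} --- the case $a=0$ gives $\J_2(\Q)\subset\ker\phi$ when $G$ is compact, and the general locally compact case then follows with $m=2m_0+2$. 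Moreover, since a compact group embeds, as a topological group, into a product of compact Lie groups, it is enough to prove the claim when the target is a compact Lie group $L$; so let $\psi\colon\J_a(\Q)\to L$ be continuous.

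\emph{Step 2 (a tail falls into the kernel).} A Lie group has no small subgroups, so some neighbourhood $W$ of the identity of $L$ contains no nontrivial subgroup; then $\psi^{-1}(W)\supset\J_m(\Q)$ for some $m>a$, and $\psi(\J_m(\Q))$, a subgroup contained in $W$, is trivial. Hence $\psi$ factors through $N:=\J_a(\Q)/\J_m(\Q)$. By Proposition~\ref{propk} each successive quotient of the chain $\J_a(\Q)\supset\J_{a+1}(\Q)\supset\dots\supset\J_m(\Q)$ is isomorphic to $\Q$; moreover a short substitution computation gives $[\J_a(\Q),\J_j(\Q)]\subset\J_{j+1}(\Q)$, so this chain is a central series for $N$. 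Consequently $N$ is nilpotent, and being an iterated central extension of copies of the divisible group $\Q$ it is divisible; it is also torsion-free, because inspection of the lowest nonvanishing coefficient shows that $f^{\circ n}\in\J_j(\Q)$ implies $f\in\J_j(\Q)$.

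\emph{Step 3 (the image is abelian; computing $[N,N]$).} The image $\psi(N)\subset L$ is a divisible nilpotent subgroup, so its closure $M$ is a compact nilpotent Lie group (nilpotency being a closed condition). The component group $M/M_0$ is finite, while the identity component $M_0$, being a connected compact nilpotent Lie group, is a torus; since $\psi(N)$ is divisible it maps trivially into $M/M_0$, hence $\psi(N)\subset M_0$ is abelian. Thus $\psi$ kills $[N,N]$, and it remains to identify $[N,N]$ with $\J_{2a+2}(\Q)/\J_m(\Q)$. The inclusion $\subset$ holds because $\J_a(\Q)/\J_{2a+2}(\Q)$ is abelian (a direct substitution computation). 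For $\supset$ I would pass to the Mal'cev Lie algebra of the torsion-free divisible nilpotent group $N$: it is the Lie algebra of formal vector fields $\mathfrak w=\{\sum_{k\ge1}c_kx^{k+1}\partial_x\}$ with $[e_i,e_j]=(j-i)e_{i+j}$, where $e_k=x^{k+1}\partial_x$, and a short computation gives $[\mathfrak w_a,\mathfrak w_a]=\mathfrak w_{2a+2}$ for $\mathfrak w_b:=\{\sum_{k\ge b+1}c_ke_k\}$ (the subalgebra corresponding to $\J_b(\Q)$); the correspondence between the derived subgroup of a torsion-free divisible nilpotent group and the derived subalgebra of its Lie algebra then yields $[N,N]=\J_{2a+2}(\Q)/\J_m(\Q)$. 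Hence $\psi(\J_{2a+2}(\Q))=\{e\}$, which is the displayed claim.

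\emph{Main obstacle.} The reductions and the no-small-subgroups step are routine; the crux is the group identity $[\J_a(\Q),\J_a(\Q)]=\J_{2a+2}(\Q)$ --- as honest subgroups, not merely up to closure --- together with the fact that a divisible nilpotent subgroup of a compact Lie group is abelian. The identity rests on the divisibility of $\J(\Q)$ over $\Q$, which is exactly the property $\J(\Z)$ lacks, and may be established either through the $\exp/\log$ correspondence above or by a descending induction along the filtration in which each step amounts to solving a linear equation over $\Q$ whose only obstruction is an integer one is free to divide by. That is the one place where the argument genuinely uses that the coefficient ring is $\Q$ rather than an arbitrary ring.
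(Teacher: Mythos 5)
Your proposal is essentially correct and follows the same overall strategy as the paper: reduce to a compact target via a compact neighbourhood of the identity, decompose that target into compact Lie groups (you use the embedding into a product, the paper uses Pontryagin's inverse--limit presentation --- these are interchangeable here), use the absence of small subgroups to factor through a nilpotent quotient, use divisibility of $\J_n(\Q)$ to kill the component group of the closure of the image, conclude the image is abelian, and finish with the commutator identity at level $2n+2$. Two points of divergence are worth noting. First, where you invoke the structure theorem that a connected compact nilpotent Lie group is a torus, the paper instead embeds the closure of the image in $\mathbf{U}(k)$ and applies the Lie--Kolchin theorem inductively (Lemma~\ref{lemmator}); the conclusions are the same. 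Second, and more substantively, the step you single out as the main obstacle --- establishing the exact group identity $[\J_a(\Q),\J_a(\Q)]=\J_{2a+2}(\Q)$, which you propose to extract from the Mal'cev correspondence --- is more than is needed, and you are forced into it only because you pass to the discrete quotient $N=\J_a(\Q)/\J_m(\Q)$, where taking closures gains nothing. The paper avoids this entirely: by Jennings one has $\J_{2n+2}(\Q)=\overline{[\J_n(\Q),\J_n(\Q)]}$ (closure taken in the inverse--limit topology of $\J(\Q)$, where it is a nontrivial operation), and since $\ker\phi$ is closed in $\J(\Q)$ and contains $[\J_n(\Q),\J_n(\Q)]$ once the image is known to be abelian, it contains $\J_{2n+2}(\Q)$ (Corollary~\ref{corker}). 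Your Mal'cev argument can surely be completed --- the derived subgroup of a radicable torsion-free nilpotent group does correspond to the derived subalgebra --- but as written it leaves that correspondence unproved, whereas the closure observation disposes of the issue in one line.
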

We recall (cf., for example, \cite{KM85}) that a group $G$ is called
(algebraically) {\it complete}
if for any $g \in G$ and any natural $k$
there exists an element $h \in G$ such that $h^k = g$.
This property is also known \cite{Scott64} as {\it divisibility} of a group.
\begin{lemma}
\label{lemma0}
If $\k$ is a field of zero characteristic
then all the groups $\J_n(\k) , \ \ n = 1, 2, ...$ are complete,
moreover the operation of taking the root is uniquely defined.
\end{lemma}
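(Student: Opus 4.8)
The plan is to prove the statement by an explicit coefficient-by-coefficient construction of the $k$-th root, together with a separate argument for uniqueness. Fix $g = x + \beta_1 x^{n+1} + \beta_2 x^{n+2} + \cdots \in \J_n(\k)$ and a natural number $k \ge 1$; we seek $h = x + \gamma_1 x^{n+1} + \gamma_2 x^{n+2} + \cdots \in \J_n(\k)$ with $h^{\circ k} = g$. First I would observe that composition of elements of $\J_n(\k)$ interacts nicely with the filtration: if $h \in \J_n(\k)$ then $h^{\circ k}(x) = x + k\gamma_1 x^{n+1} + (\text{terms in } \gamma_1,\dots,\gamma_{j-1}) + k\gamma_j x^{n+j} + \cdots$, where each coefficient of $x^{n+j}$ in $h^{\circ k}$ equals $k\gamma_j$ plus a polynomial (with integer coefficients, hence well-defined over any ring) in $\gamma_1,\dots,\gamma_{j-1}$. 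This is the heart of the matter and I would verify it by induction on $k$: writing $h^{\circ k} = h \circ h^{\circ(k-1)}$ and expanding, the coefficient of $x^{n+j}$ picks up exactly one new copy of the leading-type contribution from each factor, and the cross terms only involve strictly earlier coefficients of both $h$ and $h^{\circ(k-1)}$. Since $\k$ has characteristic zero, $k$ is invertible in $\k$, so the equation ``coefficient of $x^{n+j}$ in $h^{\circ k}$ equals $\beta_j$'' can be solved uniquely for $\gamma_j$ once $\gamma_1,\dots,\gamma_{j-1}$ are fixed. Thus $h$ exists and is uniquely determined, which gives both completeness and uniqueness of the root in one stroke.

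Concretely, I would set up the recursion: $\gamma_1 = \beta_1/k$, and having chosen $\gamma_1,\dots,\gamma_{j-1}$, let $P_j \in \Z[\gamma_1,\dots,\gamma_{j-1}]$ be the correction polynomial described above and put $\gamma_j = (\beta_j - P_j(\gamma_1,\dots,\gamma_{j-1}))/k$. Every step is legitimate because $k^{-1} \in \k$. The resulting $h$ lies in $\J_n(\k)$ by construction, and by design $h^{\circ k} = g$. For the uniqueness clause ("the operation of taking the root is uniquely defined"), the same recursion shows that any solution must have its coefficients given by exactly these formulas, so there is no choice at any stage.

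The step I expect to be the main obstacle is the clean formulation and proof of the claim that the $x^{n+j}$-coefficient of $h^{\circ k}$ has the shape $k\gamma_j + P_j(\gamma_1,\dots,\gamma_{j-1})$ with $P_j$ having integer coefficients. Making the induction on $k$ precise requires tracking how composition shifts and mixes coefficients; the key point to isolate is that composing with an element of $\J_n(\k)$ never lowers the ``order'' of a term and that the only way to produce $x^{n+j}$ without invoking an earlier $\gamma_i$ is to take the linear term $x$ from all but one factor and the $\gamma_j x^{n+j}$ term from the remaining one, contributing $k\gamma_j$. Once this structural fact is in hand, the rest is the routine triangular solve over a field in which $k$ is a unit, and one notes that the argument uses only that $k \in \k^\times$ for all positive integers $k$, which holds since $\mathrm{char}\,\k = 0$.
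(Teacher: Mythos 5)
Your proof is correct, but it takes a genuinely different route from the paper. The paper disposes of Lemma~\ref{lemma0} in two sentences by invoking the exponential map (referring to section 4 of \cite{J54}): in characteristic zero one has a logarithm/exponential correspondence between $\J_n(\k)$ and its Lie algebra of formal vector fields, under which the equation $h^{\circ k}=g$ becomes the linear equation $k\log h=\log g$, so $h=\exp\bigl(\tfrac1k\log g\bigr)$ exists and is unique at once; this viewpoint also feeds directly into the authors' remark that $\J_n(\Q)$ is the Malcev completion of $\J_n(\Z)$. You instead carry out a direct triangular solve on coefficients, resting on the structural fact that for $f,g\in\J_n(\k)$ the $j$-th coefficient of $f\circ g$ is $a_j+b_j$ plus an integer polynomial in the earlier coefficients of both factors (this is the same phenomenon as Proposition~\ref{propk}, pushed one level at a time), whence the $j$-th coefficient of $h^{\circ k}$ is $k\gamma_j+P_j(\gamma_1,\dots,\gamma_{j-1})$ and the recursion $\gamma_j=(\beta_j-P_j)/k$ settles existence and uniqueness simultaneously. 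Your inductive verification of the shape of $P_j$ is sound: the cross terms in $f\circ g$ only involve coefficients of index strictly less than $j$ because both series lie in $\J_n$. What the exponential argument buys is brevity and the conceptual link to Malcev completions; what your argument buys is a self-contained elementary proof requiring no analytic or Lie-theoretic input, together with the explicit observation that only the invertibility of the positive integers in $\k$ is used, so the conclusion holds verbatim for any $\Q$-algebra, not just a field of characteristic zero.
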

The proof is easy if we consider the exponential map and in fact it
can be understood from section 4 of \cite{J54}.
We should only note that the group $\J_n(\Q)$ is actually the Malcev completion
of $\J_n(\Z)$.
In other words $\J_n(\Q)$ is the minimal complete group containing $\J_n(\Z)$.
\begin{lemma}
\label{lemmator}
Let $G$ be a compact connected Lie group and
$\phi : \J_n(\Q) \longrightarrow G$ be a continuous homomorphism.
Then $Im(\phi)$ lies in a maximal torus of $G$.
\end{lemma}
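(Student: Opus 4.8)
The plan is to exploit the completeness (divisibility with unique roots) of $\J_n(\Q)$ established in Lemma~\ref{lemma0}, together with the structure theory of compact connected Lie groups. The key observation is that a divisible subgroup of a compact connected Lie group is necessarily abelian. Indeed, once we know $\Im(\phi)$ is abelian, its closure $T=\overline{\Im(\phi)}$ is a closed connected (being divisible, hence having connected closure) abelian subgroup of $G$, i.e. a torus, and every torus in $G$ lies in a maximal torus; this finishes the proof.

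So the heart of the matter is: \emph{every divisible subgroup $D$ of a compact connected Lie group $G$ is abelian}. First I would reduce to the case where $D$ is finitely generated as a divisible group, or rather argue elementwise: it suffices to show any two elements $a,b\in D$ commute. Pass to the closures of the divisible hulls; one wants to show a divisible subgroup has abelian closure. The cleanest route is via the exponential map and maximal tori. Since $D$ is divisible with \emph{unique} roots, for each $g\in D$ the one-parameter structure is rigid: the closure of the cyclic divisible group $\{g^{q}: q\in\Q\}$ (where $g^q$ denotes the unique $q$-th power, consistent because roots are unique) is a compact connected abelian subgroup, hence a torus $T_g$, and $g$ lies in $T_g$. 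Now any element of $G$ lying in a torus is contained in some maximal torus, and any element of $G$ is conjugate into a fixed maximal torus; the point is to show all the $T_g$, $g\in D$, can be taken inside one and the same maximal torus.

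For this I would use the following standard fact about compact connected Lie groups: if $x$ lies in a torus and $y$ lies in a torus and $x,y$ generate a group whose closure is connected, they need not commute in general — so instead I argue by induction on $\dim G$ using centralizers. Take $g\in D$, $g\neq e$; its centralizer $Z_G(g)$ is a closed subgroup of $G$ containing $D$ (does it? — only if $D$ is abelian, which is what we want — so this is circular). Let me instead take the \emph{correct} classical statement: a connected solvable (in particular connected abelian-by-what-we-are-proving) ... The robust argument is: the closure $H=\overline{D}$ is a compact group in which $D$ is dense; $D$ divisible implies $H$ is divisible, and a \emph{compact} divisible group that is a Lie group must be a torus, because a compact Lie group is divisible if and only if it is connected and abelian (a nonabelian compact connected Lie group has non-divisible elements — e.g. in $SU(2)$ not every element has a square root within... actually it does; the real obstruction is $\pi_1$ / the Weyl group: in a nonabelian compact connected Lie group the $k$-th power map is not surjective for suitable $k$, or one uses that a divisible compact group is connected and a connected compact Lie group is divisible iff abelian).

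The main obstacle, then, is pinning down and citing the precise lemma ``\emph{a compact connected Lie group all of whose elements are divisible (equivalently: which is divisible as an abstract group) is a torus}''. Granting that, the argument is short: by Lemma~\ref{lemma0}, $\J_n(\Q)$ is divisible, hence so is its continuous homomorphic image $\Im(\phi)$, hence so is the closed subgroup $H=\overline{\Im(\phi)}$ of $G$ (closure of a divisible subgroup of a topological group is divisible); being a closed subgroup of a Lie group, $H$ is a compact Lie group, and its identity component $H^\circ$ has finite index; divisibility forces $H=H^\circ$ connected, and then the cited fact gives that $H$ is a torus. Finally every torus of $G$ is contained in a maximal torus, so $\Im(\phi)\subset H\subset T$ for a maximal torus $T$, as claimed. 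I would spend the bulk of the written proof justifying the divisibility-implies-torus step (either by a direct Weyl-group/root-system argument showing the power map $g\mapsto g^{|W|}$ or similar misses the regular elements outside tori, or by reference to a standard text), and keep the rest at the level of the sketch above.
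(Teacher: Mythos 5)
There is a genuine gap, and it is located exactly where you yourself hesitate: the ``key lemma'' that a compact connected Lie group is divisible if and only if it is abelian is false. For a compact connected Lie group the exponential map is surjective, so every element lies on a one-parameter subgroup and has $k$-th roots for every $k$; in particular $SU(2)$ and $SO(3)$ are divisible nonabelian compact connected Lie groups. Consequently ``divisible subgroup of a compact connected Lie group'' carries no commutativity information ($SO(3)$ is a divisible subgroup of itself), and the entire reduction of your argument --- divisibility of $\J_n(\Q)$ passes to $\Im(\phi)$, then to its closure $H$, then $H$ is a torus --- collapses at the last step. The only thing divisibility legitimately buys is that $H$ has no nontrivial finite quotient, hence $H$ is connected; this is also the only use the paper makes of Lemma~\ref{lemma0}.

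The missing idea is nilpotency. The paper first reduces to $G={\bf U}(k)$, then invokes the no-small-subgroups property of Lie groups to find $m$ with $\J_m(\Q)\subset\ker\phi$, so that $\phi$ factors through the nilpotent group $\J_n^m(\Q)$ and hence $H=\overline{\phi(\J_n(\Q))}$ is a connected nilpotent Lie subgroup of ${\bf U}(k)$. The Lie--Kolchin theorem then produces a common eigenvector for $H$, unitarity splits off a ${\bf U}(1)$ factor, and induction on $k$ places $H$ in a maximal torus. Your proposal never uses that $\J_n(\Q)$ is anything more than divisible, and no argument along those lines can succeed; you would need to bring in the pro-nilpotent structure of $\J_n(\Q)$ (or at least the fact that $\phi$ kills some $\J_m(\Q)$) to make the proof work.
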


\begin{proof}
Without loss of generality we may assume $G = {\bf U}(k)$ for some $k$.
Let $H=\overline{\phi(\J_n(\Q))}$ and let
$H_0$ be the connected component of the identity of this group.
Since $H$ is a compact Lie group then the factor group $H{\big/}H_0$ is finite.

Consider the sequence of homomorphisms
$$
\J(\Q) \mathop{\longrightarrow}\limits^{\phi} H \longrightarrow H{\big/}H_0 .
$$
The above composition is trivial since $\J_n(\Q)$ is a complete group,
so $H$ is a connected Lie subgroup of the unitary group ${\bf U}(k)$.
Since there are no small subgroups in Lie groups ~\cite[p. 107]{MZ}
then there exists a number $m$ such that $\J_m(\Q) \subset \ker \phi$.
Thus the homomorphism $\phi$ factors through the homomorphism
$\hat{\phi}: \J_n^m(\Q) \longrightarrow H$,
so this implies the nilpotency of $H$.
Applying the Lie theorem~\cite[p. 54]{OV90} to $H$
(sometimes this result is known
as the Kolchin-Malcev theorem \cite{KM85} and as the Lie-Kolchin theorem as well \cite{WW79}),
we obtain that all the matrices of $H$ have a common eigenvector.
This together with the unitary property imply

$$
H \subset {\bf U}(1) \times {\bf U}(k-1) \subset {\bf U}(k).
$$
Inductively applying the above Lie theorem $k$ times we obtain
that $H$ lies in a maximal torus of ${\bf U}(k)$.
\end{proof}
By \cite[theorem 3.4]{J54} for a field $\k$ of zero characteristic the equality
$$
\J_{2n+2}(\k) = \overline{[\J_n(\k), \J_n(\k)]}
$$
holds.
Hence we immediately obtain
\begin{cor}
\label{corker}
Under the hypothesis of Lemma~\ref{lemmator} the inclusion
$\J_{2n+2}(\Q) \subset \ker \phi$
is fulfilled.
\end{cor}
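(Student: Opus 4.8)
The plan is to deduce this directly from Lemma~\ref{lemmator} together with the commutator identity $\J_{2n+2}(\Q)=\overline{[\J_n(\Q),\J_n(\Q)]}$ quoted just above. Lemma~\ref{lemmator} tells us that $\phi(\J_n(\Q))$ is contained in a maximal torus $T$ of $G$, and a maximal torus is an abelian group; this abelianness is the entire point, since a homomorphism into an abelian group automatically annihilates all commutators.

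Carrying this out, first I would note that $[\J_n(\Q),\J_n(\Q)]\subset\ker\phi$, because the image of any commutator lands in $[T,T]=\{e\}$. Next, since $\phi$ is continuous and the Lie group $G$ is Hausdorff, the subgroup $\ker\phi=\phi^{-1}(\{e\})$ is closed in $\J_n(\Q)$, and therefore it contains the closure $\overline{[\J_n(\Q),\J_n(\Q)]}$ of the commutator subgroup. Finally I would rewrite this closure via \cite[theorem 3.4]{J54} as $\J_{2n+2}(\Q)$, which yields the asserted inclusion $\J_{2n+2}(\Q)\subset\ker\phi$.

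I do not expect any genuine obstacle here: all the ingredients are already in place — Lemma~\ref{lemmator}, the elementary fact that the kernel of a continuous homomorphism into a Hausdorff group is closed, and Johnson's computation of the lower central series of $\J_n$ over a field of characteristic zero. The only step deserving an explicit word is the closedness of $\ker\phi$, which is immediate from the continuity of $\phi$, and this is precisely why the corollary is obtained "immediately" from the preceding material.
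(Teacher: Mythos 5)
Your proof is correct and follows exactly the route the paper intends: the corollary is stated as an immediate consequence of Lemma~\ref{lemmator} (image in a maximal torus, hence abelian, hence commutators are killed) together with the closedness of the kernel and Jennings' identity $\J_{2n+2}(\Q)=\overline{[\J_n(\Q),\J_n(\Q)]}$. The only nitpick is that the cited author of \cite{J54} is Jennings, not Johnson.
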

Now we turn to the proof of Theorem~\ref{teoQ}.
Let $U \subset X$ be a compact neighborhood of the identity.
Since $\phi$ is continuous there exists $n$ such that
$$
\phi\big{(}\J_n(\Q)\big{)} \subset U.
$$
Thus we have the mapping $\phi : \J_n(\Q) \longrightarrow G'$
where $G' = \overline{\phi\big{(}\J_n(\Q)\big{)}}$ is a compact group.
According to Pontryagin's theorem ~\cite[theorem 68]{Pont73})
$G$ is developed into an inverse sequence of compact Lie groups :
$$
G_1 \mathop{\longleftarrow}\limits^{\pi^2_1}
G_2 \mathop{\longleftarrow}\limits^{\pi^3_2} ...
\mathop{\longleftarrow}\limits^{\pi^k_{k-1}}
G_k \mathop{\longleftarrow}\limits^{\pi^{k+1}_k} ...
\longleftarrow G' .
\eqno(3)
$$
Let $\pi_k : G' \longrightarrow G_k $ be the projection of the limit group on
the $k$-th term of (3).
Let $\phi_k = \pi_k \circ \phi , \ \ k = 1, 2, ...$ .
By applying Corollary~\ref{corker} to each of the homomorphisms
$\phi_k: \J_n(\Q) \longrightarrow G_k$
we get $\J_{2n+2}(\Q) \subset \ker \phi_k$ for all $k$.
This implies
$$
\J_{2n+2}(\Q) \subset \ker \phi ,
\eqno(4)
$$
so the first part of the theorem has been proved.

In the case of a compact group $G$ we can set $n = 0$ in (4),
which finishes the proof.

\begin{remark}
\label{rkuplot}
If a topological ring $\k$ admits a continuous bijection into a compact ring $\k'$
then the group $\J(\k)$ evidently admits a continuous bijection into the compact group $\J(\k')$.
If we pass from the compact case to the locally compact one we have a quite different situation.
The ring of rational numbers $\Q$ supplied, for example, with the discrete topology
admits continuous bijections into locally compact rings, for example, into the ring
of $p$-adic numbers $\Q_{(p)}$.
Theorem~\ref{teoQ} shows that if we pass from $\Q$
to the group $\J(\Q)$ the above property completely disappears.
The latter group cannot admit any
continuous bijection into any locally compact group.
\end{remark}

\section{Compressible topological groups}

\begin{definition}
A topological group $G$ is called {\it compressible}
if for any neighborhood of unity $e \in U \subset G$ there exists an imbedding
$h_U : G \longrightarrow G$ such that $\mbox{Im}h_U \subset U$.
The corresponding embedding $h_U$ is also called {\it compression}.
\end{definition}

\begin{ex}
\label{ex1}
Let $H$ be a topological group and let ${\bf G} = H^{\aleph_0}$
be a countable direct product.
It is easy to see that the group ${\bf G}$ is compressible.

\noindent
All right shifts
$$
h_n(g_1, g_2, ...) = (e, e, ..., e, g_1, g_2, ...) , \ \ n = 1, 2, ... \ \ ,
$$
where $e$ is the identity of $G$ and $g_1$ in the right side is situated on the $(n+1)$-th position,
can be taken as corresponding compressions.

The situation can be radically different in the case of limit groups of inverse sequences
of topological groups. In particular, the $p$-adic solenoids are not compressible
groups.
\end{ex}

\begin{ex}
\label{ex2}
Let's consider the group $G = Homeo_+[0, 1]$ of the homeomorphisms of the interval $[0, 1]$
preserving the endpoints.
We suppose $G$ is supplied with the natural topology of a metric space homeomorphism group.

Let $f(x) \in G$, we assume
$$
h_n\big{(}f\big{)}(x) = {1 \over n}f(nx) , \ \ 0 \leq x \leq {1 \over n} ;
\hskip7pt
h_n\big{(}f\big{)}(x) = x , \ \ {1 \over n} \leq x \leq 1.
$$
It is clear that $h_n$ is an embedding of $G$ into itself for any $n$.
Moreover for an arbitrary neighborhood $U \subset G$ of the identity $e = x$
there exists $n$ such that $\mbox{Im}h_n \subset U$.
Thus the mappings $h_n, \ \ n = 1, 2, ...$ form a system of compressions on $G$.
Note that the Thompson group $F$ \cite{CFP96} is naturally embedded into
$Homeo_+[0, 1]$, and this embedding is compatible with the compressions
$h_{2^k}, \ \ k = 1, 2, ...$.
Thus $h_{2^k}, \ \ k = 1, 2, ...$ are the compressions of the group $F$
if it is supplied not with the discrete topology but with
the topology induced by the embedding $F \subset Homeo_+[0, 1]$.
\end{ex}
Compressibility turns out to be an obstruction for a group to be embedded into locally compact groups.
\begin{prop}
\label{propavto}
A compressible group containing an infinite discrete subgroup
cannot be embedded into any locally compact group.
\end{prop}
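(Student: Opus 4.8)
The plan is to argue by contradiction, using compressibility to push an entire copy of $G$ into an arbitrarily small neighbourhood of the identity — and, inside a locally compact group, such a neighbourhood can be chosen with compact closure. So suppose $f\colon G\to \tilde H$ is a topological embedding into a locally compact group $\tilde H$, and let $D\subset G$ be an infinite discrete subgroup. First I would use local compactness of $\tilde H$ to pick a compact neighbourhood $V$ of the identity, and set $U=f^{-1}(V)$, which is a neighbourhood of the identity in $G$.

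Next I would invoke compressibility of $G$ to obtain an embedding $h_U\colon G\to G$ with $h_U(G)\subset U$. The key object is the composition $f\circ h_U\colon G\to \tilde H$: being a composition of topological embeddings it is itself a topological embedding, and its image is contained in $f(U)\subset V$. Since $h_U(G)$ is a subgroup of $G$, the set $f(h_U(G))$ is a subgroup of $\tilde H$, and its closure $H=\overline{f(h_U(G))}$ is a subgroup of $\tilde H$ contained in $\overline V=V$, hence a compact group. Corestricting the codomain, $f\circ h_U\colon G\to H$ exhibits $G$ as a topologically embedded subgroup of a compact group. But $G$ contains the infinite discrete subgroup $D$, so this contradicts Corollary~\ref{corcompactgroup}. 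Therefore no embedding of $G$ into a locally compact group can exist.

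I expect the only point requiring a little care — bookkeeping rather than a genuine obstacle — is the verification that $f\circ h_U$ is a bona fide topological embedding (not merely a continuous monomorphism), and that passing to the closure of its image keeps us inside $V$ and produces an honest compact subgroup. These are standard: compositions and corestrictions of topological embeddings are topological embeddings, the closure of a subgroup of a topological group is again a subgroup, and a closed subset of a compact set is compact. The whole conceptual content lies in the first two steps — reading a small neighbourhood off local compactness, and then compressing $G$ into it via $h_U$.
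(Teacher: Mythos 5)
Your argument is correct and coincides with the paper's own proof: both pull back a compact neighbourhood $V$ of the identity to a neighbourhood $U$ in $G$, compress $G$ into $U$, and then observe that the closure of the compressed image is a compact group into which $G$ embeds, contradicting Corollary~\ref{corcompactgroup}. The only cosmetic difference is that the paper identifies $G$ with its image in the ambient group, while you carry the embedding $f$ explicitly.
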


\begin{proof}
Suppose the contrary, let $G \subset K$ where $G$ is a compressible group,
and $K$ is locally compact. Consider an arbitrary compact neighborhood of identity
$V \subset K$, and let $U = V \cap G$ be the corresponding neighborhood of identity in $G$.
Consider a compression $h_U: G \longrightarrow U \subset V$,
it induces an imbedding $h_U: G \longrightarrow \overline{h_U(G)}$
where the group $\overline{h_U(G)}$ is compact.
This contradicts Corollary~\ref{corcompactgroup}
since $G$ contains an infinite discrete subgroup.
\end{proof}
The homeomorphism group $G$ of example~\ref{ex2} contains infinite discrete subgroups,
the subgroup generated by the mapping $f(x) = x^2$, for example, is of such a type.
So the group $G$ cannot be embedded into a locally compact group.
\begin{theorem}
\label{teoavtodim}
Any compressible compact group is zero-dimensional or infinite-dimensional.
\end{theorem}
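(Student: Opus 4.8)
The plan is to use the structure theory of compact groups, exactly as in the proof of Theorem~\ref{teoQ}, together with the compression hypothesis. Suppose $G$ is a compressible compact group that is finite-dimensional; I want to show $\dim G = 0$. Assume toward a contradiction that $0 < \dim G < \infty$. By Pontryagin's theorem (\cite[theorem 68]{Pont73}), $G$ is the inverse limit of compact Lie groups $G_k$ with projections $\pi_k\colon G\to G_k$; since $\dim G$ is finite and equals $\sup_k \dim G_k$ (the projections $\pi_k$ are eventually $\dim$-preserving), there is a term $G_N$ with $\dim G_N=\dim G>0$ and such that $\pi_N$ does not lower dimension. So it suffices to derive a contradiction from the existence of a \emph{continuous surjection} $\pi\colon G\to L$ onto a compact Lie group $L$ with $\dim L=\dim G>0$.

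The key point is that a compact Lie group of positive dimension has \emph{no arbitrarily small subgroups}: there is a neighborhood $W\ni e$ in $L$ containing no nontrivial subgroup (\cite[p.~107]{MZ}). Pull back: $U=\pi^{-1}(W)$ is a neighborhood of $e$ in $G$. Apply the compression hypothesis to get an embedding $h_U\colon G\to G$ with $\mathrm{Im}\,h_U\subset U$. Then $\pi\circ h_U\colon G\to L$ is a continuous homomorphism whose image lies in $W$, hence is a subgroup of $L$ inside $W$, hence is trivial: $\pi(h_U(G))=\{e\}$, i.e. $h_U(G)\subset \ker\pi$. But $h_U$ is an isomorphism of $G$ onto $h_U(G)$, so $G\cong h_U(G)\subset \ker\pi$, which forces $\dim G\le \dim\ker\pi = \dim G-\dim L<\dim G$ (using that $\ker\pi$ is a closed subgroup of the compact group $G$ and $\pi$ is surjective onto $L$), a contradiction. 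Hence $\dim G=0$.

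The main obstacle is the bookkeeping that reduces the finite-dimensional case to a single surjection onto a positive-dimensional Lie group: one must be sure that in the inverse system $(G_k,\pi^k_{k-1})$ some projection $\pi_N\colon G\to G_N$ realizes the full dimension, i.e. that $\dim G=\lim_k\dim G_k$ and this limit is attained. This is standard for inverse limits of compact groups with surjective bonding maps (the dimension is monotone nondecreasing along the system and bounded by $\dim G$, and any compact neighborhood argument shows it is attained), but it should be stated carefully. Once that reduction is in place, the argument above—no small subgroups in Lie groups, kills the compressed image—is short and is the same mechanism used in Lemma~\ref{lemmator} and Theorem~\ref{teoQ}. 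I would also remark that the dichotomy is sharp: infinite products such as $(\R/\Z)^{\aleph_0}$ are compressible (Example~\ref{ex1}) and infinite-dimensional, while finite groups are compressible and zero-dimensional.
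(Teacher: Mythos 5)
Your argument is correct and uses the same key mechanism as the paper: Lie groups have no small subgroups, so a compression into the preimage of a small neighborhood forces the compressed copy of $G$ into the kernel of the projection onto a Lie quotient, contradicting monotonicity of dimension. The only difference is which form of Pontryagin's approximation theorem you invoke. You take the inverse-limit decomposition $G=\varprojlim G_k$ and must then do extra bookkeeping: that $\dim G=\sup_k\dim G_k$ is attained for a finite-dimensional compact group, and that $\dim\ker\pi=\dim G-\dim L$ for a surjection onto a compact Lie group $L$ (in fact you only need $\dim L>0$ and $\dim\ker\pi<\dim G$ here). The paper instead uses the equivalent statement (its Theorem 69 reference) that $G$ has a \emph{zero-dimensional} closed normal subgroup $N$ with $G/N$ a Lie group; then the single projection $p_N\colon G\to G/N$ does everything at once, and the contradiction is immediate from $h_U(G)\subset N$ with $\dim h_U(G)=\dim G>0=\dim N$, with no need for the quotient dimension formula or the attainment argument. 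Both routes are sound; the paper's choice of decomposition just makes the dimension count trivial, so if you keep your version you should cite or prove the two auxiliary dimension facts explicitly.
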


\begin{proof}
Let $G$ be a compressible compact group of finite positive dimension $n=\dim G>0$.
By Pontryagin theorem~\cite[theorem 69]{Pont73}
there exists a zero-dimensional normal subgroup $N$ in $G$ such that
the factor group $G/N$ is a Lie group.
Let $p_N\colon G\to G/N$ be a projection.
Since Lie groups have no small subgroups
there exists a neighborhood $V$ of the identity $p_N(e)$ in $G/N$
which does not contain any nontrivial subgroups,.
Consider the neighborhood of identity $U=p_N^{-1}(V)$ in $G$ and let
$h_U : G \longrightarrow G$ be a corresponding compression.
The subgroup $H=p_N\bigl(h_U(G)\bigr)$ is in $V$ therefore it is trivial.
This implies $h_U(G)\subset N$.
The above inclusion contradicts the dimension monotonicity principle.
So the group $G$ cannot be of finite positive dimension.
\end{proof}
The compactness hypothesis in Theorem~\ref{teoavtodim} is significant.
\begin{ex}
\label{exn}
For any natural number $n$ there exists an algebraically complete, connected, locally connected,
separable, metrizable, compressible group $G_n$, such that $\dim G_n=n$.
In fact, for any $n \in \N$ Keesling~\cite{Kees85} constructed a connected, locally connected,
algebraically complete group $K_n\subset \mathbb R^{n+1}$, such that $\dim K_n=\dim (K_n)^{\aleph_0}=n$.
According to Example ~\ref{ex1} $G_n=(K_n)^{\aleph_0}$ is a group as desired.
\end{ex}
Similar examples can be also constructed in the class of more special groups like
substitution groups $\J(\k)$, see Proposition~\ref{Erd} below.

\section{Dilation  and compressions on the group $\J(\k)$}

For an arbitrary ring $\k$ the group $\J(\k)$ admits two important topological endomorphisms
which we describe in this section.
For any element $f(x) \in \J(\k)$ defined by Formula (1) and for any $t \in \k$
we set
$$
\d_t(f) = {1 \over t}f(tx) = x(1 + t\alpha_1x + t^2\alpha_2x^2 + ...).
\eqno(5)
$$
The second part of equality (5) correctly defines $\d_t$ for all $t \in \k$,
and the first part of (5), which should be understood formally, implies that
$\d_t$ is an endomorphism of the group $\J(\k)$.
If $t$ is not a zero divisor in $\k$ then $\d_t$ is an algebraic monomorphism.
Furthermore if $\k$ is discrete (just the case we are interested in) then
$\d_t$ is an embedding of $\J(\k)$ into itself.
Let the topological endomorphism $\d_t$ introduced in (5) be
called a {\it dilation} of $\J(\k)$ corresponding to a parameter $t \in \k$.
For any $t,r \in \k$, (5) directly implies $\d_t \circ \d_r = \d_{tr}$.
Thus, if $t$ is invertible in $\k$ then $\d_t$ is an automorphism of $\J(\k)$, and
$\d_t$ realizes an embedding of the (multiplicative) group of the invertible elements
of the ring $\k$ into the automorphism group $Aut(\J(\k))$.

To define the other special mapping of $\J(\k)$ into itself we rewrite (1) as follows
$$
f(x) = x(1 + xh(x)),
$$
where $h(x) = \alpha_1 + \alpha_2x + ...$ is
a formal power series with coefficients in $\k$.
Further on, for any natural integer $s$ we set
$$
\Theta_s(f) = x\Big{(}1 + s^2x^sh(s^2x^s)\Big{)}^{1 \over s} ,
\eqno(6)
$$
where the right hand side means a subsequent binomial series expansion
together with the raising of $h(s^2x^s)$ to powers.
To prove the correctness of (6) we use the following fact of elementary analysis.

\begin{lemma}
\label{lemmaanalyse}
For any natural integer $s$
all positive index Taylor coefficients of the function
$$u(z) = \Big{(}1 + s^2z\Big{)}^{1 \over s}$$
are integer and divisible by $s$.
\end{lemma}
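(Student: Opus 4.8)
The plan is to compute the Taylor coefficients of $u$ explicitly and to reduce the statement to a single arithmetic divisibility, which I would then check prime by prime. One may assume $s\ge 2$, since for $s=1$ we have $u(z)=1+z$ and there is nothing to prove. Expanding the binomial series and using $\binom{1/s}{n}=\frac{1}{n!\,s^{\,n}}\prod_{j=0}^{n-1}(1-js)$, the $n$-th Taylor coefficient of $u$ is
$$
c_n=\binom{1/s}{n}\,s^{2n}=\frac{s^{\,n}}{n!}\prod_{j=0}^{n-1}(1-js).
$$
Put $P_n=\prod_{j=0}^{n-1}(1-js)$. Every factor $1-js$ is $\equiv 1\pmod{s}$ and, for $s\ge 2$, nonzero, so $P_n$ is a nonzero integer with $\gcd(P_n,s)=1$. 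Consequently the assertion ``$c_n$ is an integer divisible by $s$ for all $n\ge 1$'' is \emph{equivalent} to the single divisibility
$$
n!\ \big|\ s^{\,n-1}P_n\qquad (n\ge 1),
$$
since then $c_n/s=s^{\,n-1}P_n/n!\in\Z$.

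To establish this divisibility I would compare the $p$-adic valuations $v_p$ for each prime $p$. If $p\mid s$, then $v_p(P_n)=0$ while $v_p(s^{\,n-1})\ge n-1$, whereas Legendre's formula gives $v_p(n!)=(n-S_p(n))/(p-1)\le n-1$, where $S_p(n)\ge 1$ is the sum of the base-$p$ digits of $n$; so $v_p(s^{\,n-1}P_n)\ge v_p(n!)$ holds at once. If $p\nmid s$, then $s^{\,n-1}$ contributes nothing and it suffices to show $v_p(P_n)\ge v_p(n!)=\sum_{k\ge1}\lfloor n/p^k\rfloor$. For this one uses the identity $v_p(P_n)=\sum_{k\ge1}\#\{\,0\le j\le n-1:\ p^k\mid 1-js\,\}$. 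Since $\gcd(s,p^k)=1$, the congruence $p^k\mid 1-js$ forces $j$ into a single residue class modulo $p^k$, and among any $n$ consecutive integers at least $\lfloor n/p^k\rfloor$ lie in a prescribed class; summing over $k$ yields $v_p(P_n)\ge\sum_{k\ge1}\lfloor n/p^k\rfloor=v_p(n!)$, as required.

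Combining the two cases gives $n!\mid s^{\,n-1}P_n$ for every $n\ge1$, hence each $c_n$ with $n\ge1$ is an integer divisible by $s$; the constant term $c_0=1$ is excluded, the lemma concerning positive indices only. The sole step that is not pure bookkeeping is the counting argument for primes $p\nmid s$, and even that is nothing more than the familiar comparison of an arithmetic progression with a factorial. An alternative would be to extract integrality of the $c_n$ from Lagrange inversion applied to $w^s=1+s^2z$, but that route does not as readily deliver divisibility by $s$, so the valuation computation seems preferable.
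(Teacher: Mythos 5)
Your proposal is correct, but it proves the lemma by a genuinely different route than the paper does. You compute the $n$-th coefficient in closed form, $c_n=\binom{1/s}{n}s^{2n}=\frac{s^{\,n}}{n!}\prod_{j=0}^{n-1}(1-js)$, and verify $n!\mid s^{\,n-1}\prod_{j=0}^{n-1}(1-js)$ prime by prime: the factor $s^{\,n-1}$ absorbs $v_p(n!)\le n-1$ for $p\mid s$, and for $p\nmid s$ the factors $1-js$ hit the single residue class $j\equiv s^{-1}\pmod{p^k}$ at least $\lfloor n/p^k\rfloor$ times, matching Legendre's formula. All steps check out (including the needed observations that the factors are nonzero for $s\ge2$ and that $\gcd(P_n,s)=1$). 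The paper instead argues by induction on $n$ from the functional equation $u(z)^s=1+s^2z$: writing $u=1+u_{n-1}+\b_nz^n+\O(z^{n+1})$ with all coefficients of $u_{n-1}$ already known to be integers divisible by $s$, the terms $\binom{s}{k}u_{n-1}^k$ for $k\ge2$ contribute multiples of $s^2$ to the coefficient of $z^n$, so comparing with the right-hand side forces $s\b_n$ to be an integer divisible by $s^2$. The paper's induction is shorter and needs no valuation theory, but it leaves the coefficients implicit; your computation is longer yet more transparent about \emph{why} $T=s^2$ suffices (one power of $s$ per step for the primes dividing $s$, the arithmetic-progression count for the rest), which connects nicely to the discussion of the Eisenstein index in Remark~\ref{rkeisenstein}. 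Either argument fully establishes the lemma.
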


\begin{proof}
Let
$$
u(z) = 1 + \b_1z + \b_2z^2 + ...
$$
be the expansion of $u(z)$ at zero.
The integrality of $\b_k, \ \ k = 1, 2, ...$
and the divisibility of these coefficients by $s$ will be proved by induction.
By expanding $u(z)$ into a binomial series we obtain $\b_1 = s$.
Suppose that the statement is proved for all $\b_k, \ k < n$,
now we prove it for $\b_n$. Setting $u_k(z) = \b_1z + \b_2z^2 + ... + \b_kz^k$ we obtain
$$
u(z)^s = \big{(}1 + u_{n-1}(z) + \b_nz^n + \O(x^{n+1}) \big{)}^s =
\big{(}1 + u_{n-1}(z) + \b_nz^n \big{)}^s + \O(x^{n+1})  =
$$
$$
\big{(}1 + u_{n-1}(z)\big{)}^s + s\b_nz^n  + \O(x^{n+1})  =
1 + su_{n-1}(z) + \mathop{\sum}\limits_{k=2}^sC^k_s(u_{n-1}(z))^k +
s\b_nz^n + \O(x^{n+1}) .
\eqno(7)
$$
Since all the coefficients of the polynomial $u_{n-1}(z)$ are divisible by $s$,
then the coefficients of
$\mathop{\sum}\limits_{k=2}^sC^k_s(u_{n-1}(z))^k$ are divisible by $s^2$.
By setting the sum of coefficients attached to $z^n$ in the right hand side of (7) equal to zero,
and according to the functional equation $u(z)^s = 1 + s^2z$ we obtain, for $s\b_n$,
an integer expression divisible by $s^2$.
\end{proof}
\begin{remark}
\label{rkeisenstein}
The above lemma is a particular case of a general result obtained by Eisenstein,
see \cite[sec. 8, chap. 3]{PS38}.
If a series $u(z) = \b_1z + \b_2z^2 + ...$ with rational coefficients is
an algebraic function in $z$ then there exists a natural number $T$ such that
the series $u(Tz) = \b_1Tz + \b_2T^2z^2 + ...$ has integer coefficients.
The least $T$ which verify the above property is the so called Eisenstein index.
It can be expressed by prime divisors of coefficients of the equation for $u(z)$.
Sometimes this expression can be rather complex which we can see in the simplest
case $\big{(}u(z) + 1\big{)}^s = 1 + z$ we are interested in.
Lemma~\ref{lemmaanalyse} shows that in the situation under consideration we can choose $T = s^2$.
The quantity $T = s^2$ exceeds, as a rule, the corresponding Eisenstein index
but it is quite enough for our purposes (cf. \cite[sec. 8, chap. 3]{PS38}).
\end{remark}
It is clear that for an arbitrary natural $s$ all power series having the structure
$$
f(x) = x(1 + \alpha_1x^s + \alpha_2x^{2s} + ... + \alpha_nx^{ns}+ ...),
\hskip7pt \alpha_n \in {\bf k}
$$
form a closed subgroup in $\J(\k)$, which we denote as $\J_{(s)}(\k)$ .
For any $s$ we have the obvious inclusions $\J_{(s)}(\k) \subset \J_{s-1}(\k)$.
The subgroups $\J_{(s)}(\k)$ were closely studied in the case of a field $\k$
of a positive characteristic, see, for example, \cite{C99}.
Some results obtained in this article are not related to phenomena of finite characteristic
but they are of general nature and they are subject to the compressibility
scheme mentioned above.

\begin{prop}
\label{proptheta}
For an arbitrary natural $s$ the mapping $\Theta_s$ is correctly defined, and it is
a homomorphism of the group $\J(\k)$ into $\J_{(s)}(\k)$.
If $s = s1,\ \ 1 \in \k$ is not a zero divisor in $\k$ then $\Theta_s$ is a monomorphism;
moreover if $\k$ is discrete then $\Theta_s$ is an embedding.
If $s$ is invertible in $\k$ then $\Theta_s$ is an isomorphism of $\J(\k)$
onto the group $\J_{(s)}(\k)$.
\end{prop}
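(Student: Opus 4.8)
The argument rests on two facts: the integrality statement of Lemma~\ref{lemmaanalyse}, and the closed form
\[
s^{2}\,\Theta_{s}(f)(x)^{s}=f(s^{2}x^{s}),
\]
valid for every $f\in\J(\k)$ and every commutative ring $\k$. First I would settle correctness and the target subgroup. Writing $f(x)=x(1+xh(x))$ and $u(z)=(1+s^{2}z)^{1/s}=1+\sum_{k\ge1}\b_{k}z^{k}$, Lemma~\ref{lemmaanalyse} gives $\b_{k}\in\Z$ and $\b_{1}=s$; the series $x^{s}h(s^{2}x^{s})$ has zero constant term and involves only powers of $x^{s}$, so $\Theta_{s}(f)=x\,u\bigl(x^{s}h(s^{2}x^{s})\bigr)$ is a well-defined power series over any $\k$, each of its coefficients being a polynomial with integer coefficients in the $\a_{i}$, and $u\bigl(x^{s}h(s^{2}x^{s})\bigr)$ is a series in $x^{s}$ with constant term $1$, whence $\Theta_{s}(f)\in\J_{(s)}(\k)$. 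Since $h(y)=(f(y)-y)/y^{2}$ one checks $1+s^{2}x^{s}h(s^{2}x^{s})=f(s^{2}x^{s})/(s^{2}x^{s})$, and combining this with $u(z)^{s}=1+s^{2}z$ yields the displayed closed form (first as a polynomial identity over $\Z$, hence over every $\k$; over $\Q$ it reads $\Theta_{s}(f)(x)^{s}=s^{-2}f(s^{2}x^{s})$).

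For the homomorphism property $\Theta_{s}(f\circ g)=\Theta_{s}(f)\circ\Theta_{s}(g)$ I would first work over $\Q$, where $s$ is invertible and $s$-th roots in $1+x\Q[[x]]$ are unique. Using the closed form repeatedly,
\[
\bigl(\Theta_{s}(f)\circ\Theta_{s}(g)\bigr)(x)^{s}=s^{-2}f\bigl(s^{2}\,\Theta_{s}(g)(x)^{s}\bigr)=s^{-2}f\bigl(g(s^{2}x^{s})\bigr)=\Theta_{s}(f\circ g)(x)^{s},
\]
and since both $\Theta_{s}(f)\circ\Theta_{s}(g)$ and $\Theta_{s}(f\circ g)$ belong to $\J_{(s)}(\Q)$ and are congruent to $x$ modulo $x^{s+1}$, uniqueness of the $s$-th root forces equality over $\Q$. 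Then I would pass to an arbitrary $\k$: every coefficient of $\Theta_{s}(f\circ g)-\Theta_{s}(f)\circ\Theta_{s}(g)$ is a polynomial with integer coefficients in the $\a_{i}$ and $\b_{j}$ (this is precisely where Lemma~\ref{lemmaanalyse} enters), it vanishes for all rational values of its finitely many arguments, hence it is the zero polynomial over $\Z$, hence it vanishes for arguments in any commutative ring. (One could alternatively argue through the exponential correspondence of \cite[\S4]{J54}, but this descent from $\Q$ is shorter.)

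For injectivity, expanding $\Theta_{s}(f)=x\,u\bigl(x^{s}h(s^{2}x^{s})\bigr)$ shows that, for every $m\ge1$, the coefficient of $x^{1+ms}$ in $\Theta_{s}(f)$ equals $s^{2m-1}\a_{m}+P_{m}(\a_{1},\dots,\a_{m-1})$, where $P_{m}$ is a polynomial with integer coefficients depending only on the lower $\a_{i}$ (with $P_{1}=0$), all remaining coefficients of $\Theta_{s}(f)$ being $0$. If the element $s\cdot1_{\k}\in\k$ is not a zero divisor, then $s^{2m-1}$ is not a zero divisor either, so from $\Theta_{s}(f)=\Theta_{s}(g)$ an induction on $m$ yields $\a_{m}=\a_{m}'$ for all $m$, i.e. $\Theta_{s}$ is a monomorphism. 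When moreover $\k$ is discrete, $\Theta_{s}$ is continuous (each coefficient of $\Theta_{s}(f)$ depends on finitely many $\a_{i}$), and the same coefficient analysis gives $\Theta_{s}\bigl(\J_{n}(\k)\bigr)=\Theta_{s}\bigl(\J(\k)\bigr)\cap\J_{(n+1)s-1}(\k)$, which is open in $\Theta_{s}\bigl(\J(\k)\bigr)$; since the subgroups $\J_{n}(\k)$ form a base of neighborhoods of the identity, $\Theta_{s}$ is open onto its image, hence a topological embedding.

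When $s$ is invertible, $\Theta_{s}$ is a monomorphism by the previous paragraph, and I would prove surjectivity onto $\J_{(s)}(\k)$ by producing a preimage. Given $F(x)=x(1+\g_{1}x^{s}+\g_{2}x^{2s}+\cdots)\in\J_{(s)}(\k)$, the series $F(x)^{s}$ involves only powers of $x^{s}$, say $F(x)^{s}=\Psi(x^{s})$ with $\Psi(y)=y(1+\g_{1}y+\cdots)^{s}\in\J(\k)$; set $f=\d_{1/s^{2}}(\Psi)$, so that $f(s^{2}x^{s})=s^{2}\Psi(x^{s})$. Then $\Theta_{s}(f)(x)^{s}=s^{-2}f(s^{2}x^{s})=\Psi(x^{s})=F(x)^{s}$, and uniqueness of $s$-th roots in $1+x\k[[x]]$ (valid because $s$ is invertible) forces $\Theta_{s}(f)=F$. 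As $F\mapsto f$ is given by explicit finitary formulas, it is continuous, so $\Theta_{s}$ is an isomorphism of topological groups from $\J(\k)$ onto $\J_{(s)}(\k)$. The one genuinely delicate point is the homomorphism property over an arbitrary $\k$: with no division by $s$ and no uniqueness of $s$-th roots available, the ``raise to the $s$-th power'' computation does not by itself conclude, and it is exactly the integrality of the $\b_{k}$ from Lemma~\ref{lemmaanalyse} that legitimizes the descent from $\Q$ --- which is why that lemma was isolated first.
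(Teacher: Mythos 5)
Your proof is correct and follows essentially the same route as the paper: correctness of $\Theta_s$ from Lemma~\ref{lemmaanalyse}, the homomorphism property from the formal identity $s^2\Theta_s(f)(x)^s=f(s^2x^s)$ (the paper phrases this as the factorization $\Theta_s=\theta_s\circ\d_{s^2}$), injectivity from the leading coefficient $s^{2n-1}\a_n$, the embedding from the discreteness of the finite-level quotients, and surjectivity via the explicit inverse $\d_{s^{-2}}$ followed by the $s$-th power of $F(x^{1/s})$. The only difference is one of exposition: you spell out the descent of the homomorphism identity from $\Q$ to an arbitrary ring via universal integer polynomials, a step the paper leaves implicit in the word ``formal.''
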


\begin{proof}
Lemma~\ref{lemmaanalyse} immediately implies the correctness of the definition of $\Theta_s$.
If $\k$ is a discrete ring the continuity of the above mapping is evident,
in the general case we proceed as follows.
If $f(x) = x(1 + xh(x) + \O(x^{k+2}))$, where $h(x) \in \k[z]$
is a polynomial of degree $k$, then (6) implies
$$
\Theta_s(f(x)) = x(1 + H(x^s) + \O(x^{s(k+2)})),
$$
where $H(z) \in \k[z]$ is a polynomial of degree $k+1$ and moreover
$$
H(z) + 1 = \big{(}1 + s^2zh(s^2z)\big{)}^{1 \over s} \mod z^{k+2}.
$$
So the coefficients of $H(z)$ are expressed by the coefficients of $h(x)$
by means of universal integer polynomials.
This implies the continuity of $\Theta_s$.
The formal presentation $\Theta_s = \theta_s \circ \d_{s^2}$ implies that $\Theta_s$
is a homomorphism, here $\d_{s^2}$ is the dilation with the parameter $s^2 = s^21$ and
$$
\theta_s(f(x)) = f(x^s)^{1 \over s}.
\eqno(8)
$$
If $f(x) = x(1 + \a_nx^n + ...)$, where $\a_n \not= 0$, then we have from (6)
$$
\Theta_s(f) = x\Big{(}1 + s^{2n-1}\a_nx^{ns} + \O(x^{ns + 1})\Big{)}.
$$
This implies that $\Theta_s$ is a monomorphism in case when $s$ is not a zero divisor in $\k$.
Finally, we note that for an arbitrary natural $n$ formula (6) implies the existence of
the induced homomorphisms
$\Theta_s^n : \J^{n-1}(\k) \longrightarrow \J^{sn-1}(\k)$.
By virtue of all the above arguments $\Theta_s^n$ is a monomorphism if $s$
is not a zero divisor in $\k$.
It remains to note that the discreteness of $\k$ implies the discreteness of the groups
$\J^n(\k), \ \ n = 1, 2, ...$, i.e. the monomorphisms $\Theta_s^n$ are topological embeddings.
It is easy to see that
$$
\Theta_s = \mathop{\lim}\limits_{\mathop{\longleftarrow}\limits_n} \Theta_s^n,
$$
which implies that $\Theta_s$ is an embedding.
Finally, let $s$ be invertible in $\k$, and let
$$
\hat{\theta_s}(f(x)) = \Big{(}f(x^{1 \over s})\Big{)}^s,
\eqno(9)
$$
so the mapping
$$
\hat{\Theta}_s =
\d_{s^{-2}} \circ \hat{\theta_s} :\J_{(s)}(\k) \longrightarrow \J(\k),
$$
is correctly defined.
The continuity and monomorphism of $\hat{\Theta}_s$ can be verified in the same way
as in the case of $\Theta_s$ considered above.
Formulae (8) and (9) immediately imply
the equality $\hat{\Theta}_s \circ \Theta_s = \mbox{id}$.
The proof is complete.
\end{proof}

\begin{definition}
We call the endomorphism $\Theta_s$ introduced in (6) a compression in
the group $\J(\k)$ with the coefficient $s$.
\end{definition}

\begin{remark}
\label{rkalglie}
If a ring $\k$ contains the field of rational numbers $\Q$ we can eliminate the dilation
by simply setting $\Theta_s = \theta_s$, where $\theta_s$ is defined by (8).
In case of the group $\J(\R)$ the compression $\theta_s$ induces an endomorphism
of the corresponding Lie algebra denoted as $\theta_s^*$. In the canonical basis
$$L_1 = \{e_n, \ \ n = 1, 2, ...; \ \ [e_n, e_m] = (m-n)e_{n+m} \}$$
it has a simple form
$\theta_s^*(e_n) = {1 \over s}e_{sn} ,\ \ n = 1, 2, ...$.
The mapping $\theta_s^*$ is induced by the $s$-sheeted covering of the circle
if we interpret the elements of $L_1$ as polynomial vector fields on it.
The mapping $\theta_s^*$ is universal, i.e. it is defined on all the algebras
$L_k , k = 0, 1, 2, ...$ where the numeration of basis vectors is carried out for $n \geq k$,
and it is also defined on the complete Witt algebra $L$, where $n$ runs all the integers.
The analogs of the endomorphisms $\theta_s^*$ also exist
in an arbitrary finite characteristic $p$ when $s$ is relatively prime with $p$.
\end{remark}

\begin{prop}
\label{Erd}
There exists such a topological ring with unity $\k$ that the substitution group
$\J(\k)$ is a one-dimensional separable metrizable compressible group.
\end{prop}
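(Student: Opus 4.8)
The plan is to build the ring $\k$ so that its additive group is exactly one of Keesling's groups $K_1 \subset \R^2$ of dimension one, thereby transporting all the good topological properties of $K_1$ through the set-isomorphism $j \colon \k^{\aleph_0} \to \J(\k)$ of the introduction. Recall from Example~\ref{exn} that Keesling constructed a connected, locally connected, algebraically complete topological group $K_1 \subset \R^2$ with $\dim K_1 = \dim (K_1)^{\aleph_0} = 1$. The first step is to equip the underlying set of $K_1$ with a compatible ring structure; the cleanest choice is to take $\k$ to be a topological field whose additive topological group is homeomorphic to $K_1$ (for instance one realises $K_1$ as a one-dimensional connected subfield of a suitable completion, or simply transports a field structure across the abstract group isomorphism, since as an abstract group $K_1$ is a $\Q$-vector space of continuum dimension and hence carries a field structure of characteristic zero). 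What matters topologically is only the additive group, because the product topology on $\J(\k) \cong \k^{\aleph_0}$ depends only on that.

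Next I would read off the four required properties of $\J(\k)$ one at a time. \emph{Metrizable and separable:} $\k^{\aleph_0}$ is a countable product of separable metrizable spaces, hence separable metrizable, and $j$ is a homeomorphism, so $\J(\k)$ is too. \emph{One-dimensional:} this is the crux — $\dim \J(\k) = \dim \k^{\aleph_0} = \dim (K_1)^{\aleph_0} = 1$ by Keesling's equality, which is precisely why one must use his example rather than an arbitrary one-dimensional ring (a generic countable power would jump to infinite dimension). \emph{Compressible:} here I invoke the dilation and compression endomorphisms of Section~5. Since $\k$ has characteristic zero we are in the situation of Remark~\ref{rkalglie}, and the maps $\theta_s = \Theta_s$ of Proposition~\ref{proptheta} are topological embeddings of $\J(\k)$ into $\J_{(s)}(\k) \subset \J_{s-1}(\k)$; as $s \to \infty$ the subgroups $\J_{s-1}(\k)$ form a neighbourhood base at the identity, so for any neighbourhood $U$ of $e$ we may pick $s$ with $\J_{s-1}(\k) \subset U$ and take $h_U = \theta_s$, giving compressibility directly from the definition. (Alternatively one could cite Example~\ref{ex1} applied to $\k^{\aleph_0}$, but the $\Theta_s$ route keeps everything inside the substitution-group language.)

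Finally, the remaining adjectives. \emph{Connected and locally connected:} $K_1$ is connected and locally connected, a countable product of connected locally connected spaces is again connected and locally connected, and a homeomorphic copy inherits both; so $\J(\k)$ is connected and locally connected, which also shows the one-dimensionality is not a degenerate (totally disconnected) phenomenon. The main obstacle, and the step to treat with care, is the very first one: producing a bona fide topological ring — with a jointly continuous multiplication and a unit — whose additive topological group is Keesling's $K_1$. If transporting a field structure abstractly does not automatically yield continuity of multiplication, the fallback is to choose $\k$ to be a concrete one-dimensional connected topological subfield of $\R$ or of a non-archimedean line engineered to have the Keesling-type dimension behaviour under countable powers; in the write-up I would isolate this construction as the one non-formal point and verify joint continuity of multiplication explicitly, after which all four asserted properties of $\J(\k)$ follow mechanically as above.
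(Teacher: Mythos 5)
There is a genuine gap, and it sits exactly where you flagged it: the production of a topological ring with unity whose underlying topology has the required dimension behaviour. Neither of your two suggested routes works. Transporting a field structure across an abstract group isomorphism of $K_1$ (as a $\Q$-vector space of continuum dimension) gives no control whatsoever over the continuity of the resulting multiplication, because the abstract isomorphism is not a homeomorphism; you would obtain a ring whose multiplication is, in general, wildly discontinuous for the topology of $K_1$, and then $\J(\k)$ is not a topological group in any useful sense tied to that topology. Your fallback fails outright: a connected subfield of $\R$ is a connected subset of $\R$, hence an interval, and since it contains $\Z$ it is unbounded in both directions, so it must be all of $\R$ --- and $\dim \R^{\aleph_0}=\infty$, which destroys one-dimensionality of $\J(\k)$. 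So the one ``non-formal point'' you isolate is in fact the entire content of the proposition, and it is not supplied. (You also spend effort on connectedness and local connectedness of $\J(\k)$, which the statement does not ask for; insisting on a connected example is precisely what makes your construction impossible along these lines.)

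The paper resolves this differently: it takes $\k$ to be the Erd\"os space $E\subset\ell_2$ of square-summable rational sequences --- a totally disconnected, one-dimensional, separable metrizable space --- and equips it with an \emph{explicit} continuous multiplication, namely
$(x_0,\bar x)(y_0,\bar y)=(x_0y_0,\; x_0\bar y+y_0\bar x+\bar x\bar y)$
with coordinatewise product $\bar x\bar y$, making $E$ a topological $\Q$-algebra with unity $(1,\bar 0)$. The dimension count then uses that all finite powers of $E$ are homeomorphic to $E$ and that $\dim E^{\aleph_0}=1$ (Engelking), so $\dim\J(E)=\dim E^{\aleph_0}=1$. Your treatment of compressibility is correct in spirit and matches the paper: since the ring is a $\Q$-algebra, the maps $\theta_s$ of (8) are embeddings with image in $\J_{(s)}(\k)\subset\J_{s-1}(\k)$, and the subgroups $\J_{s-1}(\k)$ form a neighbourhood base at the identity. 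But without an actual topological ring having the Keesling-type dimension property, the argument does not get off the ground; you need either the Erd\"os-space construction or some equally explicit substitute with a verifiably continuous multiplication.
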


\begin{proof}
Consider the Erd\"os space $E$ which is the subset of the Hilbert space $\ell_2$,
that consists of sequences of rational numbers.

Endow $E$ with a ring structure as follows.
Let elements of $\ell_2$ have coordinates
${\bf x} = (x_0, x_1, x_2, ...)$, write down this element ${\bf x}$ in the form
${\bf x} = (x_0, \bar{x})$, where $\bar{x}$ contains all the coordinates
except $x_0$.
To define a multiplication on $\ell_2$ we set
$$
{\bf x}{\bf y} = (x_0y_0, x_0\bar{y} + y_0\bar{x} + \bar{x}\bar{y}),
$$
for two vectors ${\bf x} = (x_0, \bar{x})$ and ${\bf y} = (y_0, \bar{y})$,
where $\bar{x}\bar{y}$ is the coordinate-wise multiplication.
It is clear that $\ell_2$ supplied with such multiplication becomes an algebra
with unity $ {\bf 1} = (1, \bar{0})$, and so $E$ becomes a sub-ring with unity (and even a $\Q$-algebra).
We take the topological ring $E$ just defined as the base ring $\k$.
The topological space $E$ is one-dimensional and all its finite powers
are homeomorphic to itself, so they are also one-dimensional.
Its countable power $E^{\aleph_0}$ proves to be also one-dimensional
~\cite[examples 1.2.15 and 1.5.17]{Eng}.
Thus the substitution group $\J(E)$ yields a desired example.
Since $E$ is a $\Q$-algebra then homomorphisms (8)
form a system of compressions on $\J(E)$.
\end{proof}

\bigskip

{\parindent =0.7truecm

\end{document}